\title[Twisted Lefschetz numbers of infra-solvmanifolds]
{Twisted Lefschetz numbers of infra-solvmanifolds and  algebraic groups }
\author{Hisashi Kasuya}
\theoremstyle{plain}
\theoremstyle{plain}
\theoremstyle{plain}
\theoremstyle{plain}
\newtheorem{theorem}{Theorem}[section] 
\theoremstyle{remark}
\newtheorem{remark}{Remark}
\theoremstyle{Main result}
\newtheorem{main result}{Main result}
\theoremstyle{lemma}
\newtheorem{lemma}[theorem]{Lemma}
\theoremstyle{definition}
\newtheorem{definition}[theorem]{Definition}
\theoremstyle{proposition}
\newtheorem{proposition}[theorem]{Proposition}
\theoremstyle{corollary}
\theoremstyle{remark}
\newtheorem{example}{Example}
\address[Hisashi Kasuya]{Department of Mathematics, Graduate School of Science, Osaka University, Osaka,  Japan.}
\email{kasuya@math.sci.osaka-u.ac.jp}
\keywords{Topological fixed point theory, cohomology of algebraic group, twisted lefschetz number,  infra-solvmanifold}
\subjclass[2010]{22E25;	20G20 ;  54H25; 55M20  }
\newcommand{\C}{\mathbb{C}}
\newcommand{\R}{\mathbb{R}}
\newcommand{\Z}{\mathbb{Z}}
\newcommand{\g}{\frak{g}}
\begin{document} 

\maketitle
\begin{abstract}
Twisted Lefschetz numbers are extensions of the ordinary Lefschetz numbers for cohomologies with values in flat bundles.
As a generalization of  linearization formula for the  ordinary Lefschetz number of a diffeomorphism of a  nilmanifold, we show that a twisted Lefschetz number of any diffeomorphism  of any infra-solvmanifold is equal to the determinant ${\rm det}(I-A)$ for some matrix $A$ by using the cohomology of algebraic groups.
\end{abstract}
\section{Introduction}

Let $M$ be a compact manifold and $E$ be a flat vector bundle over $M$.
We consider the de Rham cohomology $H^{\ast}(M,E)$ with values in  $E$.
Let $\varphi:M\to M$ be a smooth map and a morphism $\Xi: \varphi^{\ast}E\to E$ of flat bundles  where $\varphi^{\ast}E $ is the pull-back of $E$.
Consider the induced map $\Xi\circ H^{\ast}(\varphi): H^{\ast}(M,E)\to H^{\ast}(M,E)$.
We define the twisted Lefschetz number
\[L(\varphi,E, \Xi)=\sum_{i}(-1)^{i}{\rm trace} (\Xi\circ H^{\ast}(\varphi))_{\vert H^{i}(M,E)}.\]
As for the ordinary Lefschetz number, we can show the Lefschetz fixed point formula for the twisted Lefschetz number by the Atiyah-Bott theorem (\cite{AB}).
If $\varphi $ is a diffeomorphism so that the graph of $\varphi$ is transversal to the diagonal in $M\times M$,
then we have
\[L(\varphi,E, \Xi)=\sum_{\varphi(x)=x}{\rm sign}({\rm det} (I-d\varphi_{x}))\cdot {\rm trace}(\Xi_{x}).
\]

In general, computations of the twisted Lefschetz numbers are harder than computations of  usual Lefschetz numbers.
In this paper we give a simple formula for  twisted Lefschetz numbers of diffeomorphisms of solvmanifolds and infra-solvmanifolds  which is a generalization of the  "linearization formula" for  ordinary Lefschetz numbers of maps of nilmanifolds.

In this paper solvmanifolds (resp. nilmanifolds) are compact homogeneous spaces of connected solvable (resp. nilpotent) Lie groups.
We say that a solvmanifold (resp. nilmanifold) $M$ has a discrete presentation $(G,\Gamma)$ if there exists a simply connected solvable (resp. nilpotent) Lie group $G$ with a cocompact discrete subgroup $\Gamma$ so that $M$ is diffeomorphic to $G/\Gamma$.
It is known that every nilmanifold $M$ has a discrete presentation $(G,\Gamma)$ (see \cite{Mal}).

Consider a nilmanifold $G/\Gamma$ for a discrete presentation $(G,\Gamma)$.
For a smooth map $\varphi: G/\Gamma \to G/\Gamma$, since the fundamental group of $G/\Gamma$ is $\Gamma$,
 we have the  map $f: \Gamma\to \Gamma$ induced by $\varphi$. 
It is known that any homomorphism between cocompact discrete subgroups of simply connected nilpotent Lie groups can be extended to a Lie group homomorphism.
Hence we have an extension $T:G\to G$ of $f$.
In \cite{Nom}, Nomizu showed that the de Rham cohomology of  a nilmanifold $G/\Gamma$ for a discrete presentation $(G,\Gamma)$ is isomorphic to the Lie algebra cohomology of the Lie algebra $\g$ of $G$.
Hence, considering the induced homomorphism $T_{\ast}:\g\to \g$, taking a matrix presentation $A$ of  $T_{\ast}:\g\to \g$, we can get the "linearization formula"
\[L(\varphi)={\rm det}(I-A)
\]
where $L(\varphi)$ is the ordinary Lefschetz number.

We are interested in getting a "linearization formula" for solvmanifolds.
But there are many difficulties:
\begin{itemize}
\item In general, a solvmanifold $M$ does not have a discrete presentation $(G,\Gamma)$.
\item  In general, for a simply connected solvable Lie group with a cocompact discrete subgroup $\Gamma$, a homomorphism $\Gamma\to \Gamma$ may not extend to a Lie group homomorphism $G\to G$.
\item  In general, the de Rham cohomology of a solvmanifold $G/\Gamma$ for a discrete presentation $(G,\Gamma)$ is not isomorphic to the Lie algebra cohomology of the Lie algebra $\g$ of $G$.

\end{itemize}
Assuming some conditions, we can get a "linearization formula" (see for examples \cite{Mc} and \cite{JM}).
However on general  solvmanifolds, it seems to be difficult to get  a "linearization formula".

In this paper we consider the more general setting.
 An infra-solvmanifold is a manifold of the form $G/\Delta$, where $G$ is a simply connected solvable Lie group, and $\Delta$ is a torsion-free subgroup of ${\rm Aut}(G)\ltimes G$ such that the closure of $h(\Delta)$ in ${\rm Aut}(G)$ is compact where $h:{\rm Aut}(G)\ltimes G\to {\rm Aut}(G)$ is the projection.
An infra-solvmanifold is a  generalization of a solvmanifold.
See \cite{B} and \cite{FJ} for properties of infra-solvmanifolds.
The purpose of this paper is to give a "linearization formula" for any diffeomorphism of any infra-solvmanifold by using the twisted  Lefschetz number and algebraic groups.

Let  $M$ be an infra-solvmanifold with the fundamental  group $\Gamma$ and  $\varphi: M\to M$  a diffeomorphism.
Denote by $f:\Gamma\to \Gamma$ the homomorphism induced by $\varphi$.
Then, by methods of algebraic groups given in \cite{R, B}, we can say that $f:\Gamma\to \Gamma$ induces an explicit isomorphism on a certain Lie algebra ${\frak u}_{\Gamma}$.
For a matrix presentation $A$ of such isomorphism, we will prove that by taking some $E$ and $\Xi$, the formula 
\[L(\varphi,E, \Xi)={\rm det}(I-A)
\]
holds.

\section{Algebraic groups and representations}
An  algebraic group $\mathcal G$ is a Zariski-closed subgroup of $GL_{n}(\C)$.
 We denote  by ${\mathcal  U}({\mathcal G})$  the unipotent radical of  $\mathcal G$.
If ${\mathcal  U}({\mathcal G})$ is trivial, an algebraic group $\mathcal G$ is called reductive.

Denote by  ${\rm Rep}(\mathcal G)$  the category of finite-dimensional rational representations $V_{\rho}$ of $\mathcal G$.
$V_{\rho}\in {\rm Rep}(\mathcal G)$ says $V_{\rho}$ is a $\C$-vector space with a rational representation $\rho:{\mathcal G}\to GL(V_{\rho})$.
For $V_{\rho}\in {\rm Rep}(\mathcal G)$, we denote by $\overline{V_{\rho}}$ the $\C$-vector space $V_{\rho}$ with the trivial representation.

\section{Representations of reductive algebraic groups}
Let $\mathcal G$ be a reductive  algebraic group and $V_{\rho}\in {\rm Rep}(\mathcal G)$.
We assume that we have an algebraic group isomorphism $F: \mathcal G\to \mathcal G$  and a linear isomorphism $\Phi: V_{\rho}\to V_{\rho}$ satisfying 
\[\rho(g)\circ\Phi=\Phi \circ \rho(F(g))
\]
for every $g\in \mathcal G$.

We take a finite set $\Lambda_{\rho}=\{V_{\alpha}\}$ of irreducible representations of $\mathcal G$ such that 
we have an irreducible decomposition $V_{\rho}=\bigoplus_{\Lambda_{\rho}} E_{\alpha}$ and $E_{\alpha}$ is a non-trivial irreducible component of $V_{\rho}$ corresponding to an irreducible $V_{\alpha}\in  {\rm Rep}(\mathcal G)$.
Denote $E_{\alpha\circ F}=\Phi(E_{\alpha})$. Then we have the irreducible decomposition $V_{\rho}=\bigoplus_{\Lambda_{\rho}} E_{\alpha\circ F}$.
Consider $V_{\alpha\circ F}\in  {\rm Rep}(\mathcal G)$ as the vector space $V_{\alpha}$ with the representation $\alpha\circ F$.
Then, $E_{\alpha\circ F}$ is a non-trivial irreducible component of $V_{\rho}$ corresponding to an irreducible $V_{\alpha\circ F}\in  {\rm Rep}(\mathcal G)$.
Comparing two irreducible decomposition $V_{\rho}=\bigoplus_{\Lambda_{\rho}} E_{\alpha}=\bigoplus_{\Lambda_{\rho}} E_{\alpha\circ F}$, for each $\alpha\in \Lambda_{\rho}$, we have a unique $\beta\in  \Lambda_{\rho}$ such that we have an isomorphism $\psi_{\alpha}:V_{\alpha\circ F}\cong V_{\beta}$.

Let us consider the linear map $\Phi_{\alpha}: V_{\rho}\otimes V_{\alpha}^{\ast}\otimes \overline{V_{\alpha}}\to V_{\rho}\otimes V_{\alpha\circ F}^{\ast}\otimes \overline{V_{\alpha\circ F}}$ defined by 
$\Phi_{\alpha}(f_{\alpha}\otimes v_{\alpha})=\Phi\circ f_{\alpha}\otimes v_{\alpha}$ for $f_{\alpha}\in V_{\rho}\otimes V_{\alpha}^{\ast}$, $v_{\alpha}\in \overline{V_{\alpha}}$ (we should notice $\overline{V_{\alpha}}=\overline{V_{\alpha\circ F}}$).
Then this linear map is $\mathcal G$-equivariant and hence we can restrict $\Phi_{\alpha}: (V_{\rho}\otimes V_{\alpha}^{\ast})^{\mathcal G}\otimes \overline{V_{\alpha}}\to (V_{\rho}\otimes V_{\alpha\circ F}^{\ast})^{\mathcal G}\otimes \overline{V_{\alpha\circ F}}$.
Consider the isomorphism $(\psi_{\alpha}^{-1})^{\ast}\otimes \psi_{\alpha}: V_{\alpha\circ F}^{\ast}\otimes \overline{V_{\alpha\circ F}}\to  V_{\beta}^{\ast}\otimes \overline{V_{\beta}}$.
Then we have the linear map $((\psi_{\alpha}^{-1})^{\ast}\otimes \psi_{\alpha})\circ \Phi_{\alpha}: (V_{\rho}\otimes V_{\alpha}^{\ast})^{\mathcal G}\otimes \overline{V_{\alpha}}\to (V_{\rho}\otimes  V_{\beta}^{\ast})^{\mathcal G}\otimes \overline{V_{\beta}}$.

We consider $V_{\phi}\in  {\rm Rep}(\mathcal G)$ as $V_{\phi}=\bigoplus_{\Lambda_{\rho}}(V_{\rho}\otimes V_{\alpha}^{\ast})^{\mathcal G}\otimes \overline{V_{\alpha}}$.
Then we have the linear map $\bigoplus_{\Lambda_{\rho}}((\psi_{\alpha}^{-1})^{\ast}\otimes \psi_{\alpha})\circ \Phi_{\alpha}: (V_{\rho}\otimes V_{\phi})^{\mathcal G} \to (V_{\rho}\otimes V_{\phi})^{\mathcal G}$.

\begin{lemma}\label{Id}
There exists an linear isomorphism $(V_{\rho}\otimes V_{\phi})^{\mathcal G}\cong  \overline{V_{\rho}}$ which identifies 
 the linear map $\bigoplus_{\Lambda_{\rho}}((\psi_{\alpha}^{-1})^{\ast}\otimes \psi_{\alpha})\circ \Phi_{\alpha}: (V_{\rho}\otimes V_{\phi})^{\mathcal G} \to (V_{\rho}\otimes V_{\phi})^{\mathcal G}$ with 
 $\Phi:  \overline{V_{\rho}}\to  \overline{V_{\rho}}$.
\end{lemma}
\begin{proof}
By \cite[Theorem 27.3.6]{TY}, we have the isomorphism
 \[(V_{\rho}\otimes V_{\alpha}^{\ast})^{\mathcal G}\otimes \overline{V_{\alpha}}\ni f_{\alpha}\otimes v_{\alpha}\mapsto f(v_{\alpha})\in \overline{E_{\alpha}}.\]
Thus we have the isomorphism $(V_{\rho}\otimes V_{\phi})^{\mathcal G}\cong \bigoplus_{\Lambda_{\rho}}\overline{E_{\alpha}}=\overline{V_{\rho}}$.
Since we have
\[((\psi_{\alpha}^{-1})^{\ast}\otimes \psi_{\alpha})\circ \Phi_{\alpha} (f_{\alpha}\otimes v_{\alpha})=\Phi\circ f_{\alpha}\circ \psi^{-1}_{\alpha}\otimes \psi_{\alpha}(v_{\alpha}),
\]
the lemma follows from this isomorphism.

\end{proof}

\section{Rational cohomology of algebraic groups}\label{RC}

Let $\mathcal G$ be an algebraic group.
In this section,  we denote ${\mathcal  U}={\mathcal  U}({\mathcal G})$
Let $F:{\mathcal G}\to  {\mathcal G}$ be an isomorphism of  algebraic groups.
Consider  the extension
\begin{equation}\label{exun}
\xymatrix{
1\ar[r]&{\mathcal U}\ar[r]&\mathcal G\ar[r]&\mathcal G/{\mathcal U}\ar[r]&1
}.
\end{equation}
It is known that this extension  splits (\cite{Mos1}).
The restriction $F_{\mathcal U}:{\mathcal U}\to {\mathcal U}$ is also an isomorphism and $F$ induces an isomorphism ${\bar F}:{\mathcal G}/{\mathcal U}\to {\mathcal G}/{\mathcal U}$.

For   $V_{\phi}\in {\rm Rep}(\mathcal G)$, we define the rational cohomology $H^{\ast}({\mathcal G},V_{\phi})={\rm Ext}^{\ast}_{\mathcal G}(\C,V_{\phi})$ as in \cite{Hoc} and \cite{Jan}.
Consider the induced map $H^{\ast}(F): H^{\ast}({\mathcal G} ,V_{\phi})\to H^{\ast}({\mathcal G}, V_{\phi\circ F})$.
Let $\Psi: V_{\phi\circ F}\to V_{\phi}$ be an isomorphism in ${\rm Rep}(\mathcal G)$.
We consider the linear map $\Psi\circ  H^{\ast}(F): H^{\ast}({\mathcal G} ,V_{\phi})\to  H^{\ast}({\mathcal G} ,V_{\phi})$.
It is known that we have a natural isomorphism
\[H^{\ast}({\mathcal G}, V_{\phi})
 \cong  H^{\ast}({\mathcal U}, V_{\phi})^{\mathcal G/{\mathcal U}}\cong H^{\ast}({\frak u}, V_{\phi})^{\mathcal G/{\mathcal U}}
\]  induced by the extension  (\ref{exun}) and the exponential map ${\frak u} \to {\mathcal U}$
(see \cite{Hoc},\cite{Haia})
where ${\frak u}$ is the Lie algebra of ${\mathcal U}$ and $H^{\ast}({\frak u}, V_{\phi})$ is the Lie algebra cohomology and we regard $H^{\ast}({\mathcal U}, V_{\phi}), H^{\ast}({\frak u}, V_{\phi})\in {\rm Rep}(\mathcal G/{\mathcal U})$ via the adjoint action.
This isomorphism identifies the linear map $\Psi\circ  H^{\ast}(F): H^{\ast}({\mathcal G} ,V_{\phi})\to  H^{\ast}({\mathcal G} ,V_{\phi})$ with 
the linear map $\Psi\circ  H^{\ast}(F_{{\mathcal U} \ast}): H^{\ast}({\frak u}, V_{\phi})^{\mathcal G/{\mathcal U}}\to  H^{\ast}({\frak u}, V_{\phi})^{\mathcal G/{\mathcal U}}$ where $F_{{\mathcal U} \ast}:{\frak u}\to {\frak u}$ is the derivation of $F_{\mathcal U}:{\mathcal U}\to {\mathcal U}$.

Denote by $H^{\ast}({\frak u})$ the Lie algebra cohomology with values in the $1$-dimensional trivial representation.
We apply the arguments in the last section to the objects:
\begin{itemize}
\item The reductive algebraic group $\mathcal G/{\mathcal U}$.
\item The representation $H^{\ast}({\frak u})\in {\rm Rep}(\mathcal G/{\mathcal U})$.
\item The  isomorphism ${\bar F}:{\mathcal G}/{\mathcal U}\to {\mathcal G}/{\mathcal U}$.
\item The linear isomorphism $H^{\ast}(F_{{\mathcal U} \ast}): H^{\ast}({\frak u})\to H^{\ast}({\frak u})$.
\end{itemize}
Then, taking $V_{\phi}\in {\rm Rep}({\mathcal G}/{\mathcal U})$ and $\Psi=\bigoplus_{\Lambda_{\rho}}((\psi_{\alpha}^{-1})^{\ast}\otimes \psi_{\alpha})$ as in the last section and extending $V_{\phi}\in {\rm Rep}({\mathcal G})$ by  the extension  (\ref{exun}), the isomorphism
\[H^{\ast}({\frak u}, V_{\phi})^{\mathcal G/{\mathcal U}}=(H^{\ast}({\frak u})\otimes V_{\phi})^{\mathcal G/{\mathcal U}}\cong \overline{H^{\ast}({\frak u}})
\]
 in Lemma \ref{Id}  identifies the linear map $\Psi\circ  H^{\ast}(F_{{\mathcal U} \ast}): H^{\ast}({\frak u}, V_{\phi})^{\mathcal G/{\mathcal U}}\to  H^{\ast}({\frak u}, V_{\phi})^{\mathcal G/{\mathcal U}}$ with the linear map
 $H^{\ast}(F_{{\mathcal U} \ast}): \overline{H^{\ast}({\frak u})}\to \overline{H^{\ast}({\frak u})}$.
This implies the following:

\begin{proposition}\label{tratar1}
Let $A$ be a matrix presentation of the linear map
$F_{{\mathcal U} \ast}:\frak u\to \frak u$.
Then we have 
\[\sum_{i}(-1)^{i}{\rm trace} (\Psi\circ  H^{\ast}(F))_{\vert H^{i}({\mathcal G}, V_{\phi})}={\rm det}(I-A).\]

\end{proposition}
\section{Group cohomology of torsion-free virtually polycyclic groups}

\begin{lemma}{\rm(\cite[Lemma 4.36.]{R})}\label{hut}
Let $\Gamma$  be a torsion-free virtually polycyclic group.
  For a finite-dimensional  representation $\rho:\Gamma\to GL(V_{\rho})$ on a complex vector space $V_{\rho}$, denote by $\mathcal G$ the  Zariski-closure of $\rho(\Gamma)$  in $ GL(V_{\rho})$. Then we have $\dim \mathcal U(\mathcal G)\le {\rm rank}\, \Gamma$.
\end{lemma}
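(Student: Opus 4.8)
The plan is to reduce to the strongly polycyclic case and then to induct on the rank. First I would pass to a finite-index subgroup: by definition a torsion-free virtually polycyclic group $\Gamma$ contains a poly-$\Z$ (i.e. poly-infinite-cyclic) subgroup $\Gamma_{0}$ of finite index, and ${\rm rank}\,\Gamma_{0}={\rm rank}\,\Gamma$. If $\mathcal{G}_{0}$ denotes the Zariski-closure of $\rho(\Gamma_{0})$, then $\mathcal{G}_{0}$ is a closed subgroup of finite index in $\mathcal{G}$, hence contains the identity component $\mathcal{G}^{0}$; since the unipotent radical is connected and is the unipotent radical of the identity component, we get $\mathcal{U}(\mathcal{G}_{0})=\mathcal{U}(\mathcal{G})$. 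Thus it suffices to prove the bound when $\Gamma$ is poly-$\Z$, with $k={\rm rank}\,\Gamma$ equal to the number of infinite cyclic factors.

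The inductive engine will be a subadditivity statement for the unipotent radical: for any short exact sequence $1\to \mathcal{G}'\to \mathcal{G}\to \mathcal{G}''\to 1$ of real algebraic groups I would argue that $\dim \mathcal{U}(\mathcal{G})\le \dim \mathcal{U}(\mathcal{G}')+\dim \mathcal{U}(\mathcal{G}'')$. Indeed $\mathcal{U}(\mathcal{G})\cap \mathcal{G}'$ is a normal unipotent subgroup of $\mathcal{G}'$, hence lies in $\mathcal{U}(\mathcal{G}')$, while the image of $\mathcal{U}(\mathcal{G})$ in $\mathcal{G}''$ is a normal unipotent subgroup of $\mathcal{G}''$, hence lies in $\mathcal{U}(\mathcal{G}'')$; the dimension formula $\dim \mathcal{U}(\mathcal{G})=\dim(\mathcal{U}(\mathcal{G})\cap \mathcal{G}')+\dim(\text{image})$ then yields the inequality.

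For the induction step I would choose a normal subgroup $\Gamma'\triangleleft \Gamma$ with $\Gamma/\Gamma'\cong \Z$ and ${\rm rank}\,\Gamma'=k-1$, and let $\mathcal{G}'$ be the Zariski-closure of $\rho(\Gamma')$. Conjugation by any $\rho(\gamma)$ preserves $\mathcal{G}'$, so $\mathcal{G}'$ is normal in $\mathcal{G}$ and $\mathcal{G}/\mathcal{G}'$ is an algebraic group in which the image of $\rho(\Gamma)$ is Zariski-dense and cyclic, generated by the image $\bar g$ of a single element. The crucial computation is that the Zariski-closure of $\langle \bar g\rangle$ has unipotent radical of dimension at most $1$: writing $\bar g=\bar g_{s}\bar g_{u}$ for its Jordan decomposition, the closure splits as the product of a diagonalizable group spanned by $\bar g_{s}$ and the closure of $\langle \bar g_{u}\rangle$ (trivial, or isomorphic to the additive group $\mathbb{G}_{a}$), and only the latter contributes to the unipotent radical, so $\dim \mathcal{U}(\mathcal{G}/\mathcal{G}')\le 1$. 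Combining the subadditivity inequality with the induction hypothesis $\dim \mathcal{U}(\mathcal{G}')\le k-1$ gives $\dim \mathcal{U}(\mathcal{G})\le (k-1)+1=k={\rm rank}\,\Gamma$, the base case $k=0$ (where $\Gamma$ is trivial) being immediate. I expect the main obstacle to be the bookkeeping in the two reductions: verifying that the unipotent radical is genuinely unchanged on passing to the finite-index subgroup, and that the cyclic quotient $\mathcal{G}/\mathcal{G}'$ contributes at most one unipotent dimension via the Jordan decomposition.
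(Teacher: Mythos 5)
Your argument is correct, but note that the paper itself contains no proof of this statement: it is quoted directly from Raghunathan \cite[Lemma 4.36]{R}, so the only meaningful comparison is with the proof in that source, and what you have written is essentially that classical argument --- reduce to a poly-$\Z$ subgroup of finite index (using that a Zariski-closed subgroup of finite index contains the identity component, so the unipotent radical is unchanged), then induct on the rank via a normal subgroup with quotient $\Z$, using subadditivity of $\dim{\mathcal U}(\cdot)$ in short exact sequences and the fact that the Zariski closure of a cyclic group has at most one-dimensional unipotent part. All the supporting steps you invoke are sound: ${\mathcal G}'$ is normal in ${\mathcal G}$ because the normalizer of a closed subgroup is Zariski-closed and contains the dense set $\rho(\Gamma)$; the Jordan components $\bar g_{s},\bar g_{u}$ lie in any closed subgroup containing $\bar g$ (characteristic zero), so the closure of $\langle \bar g\rangle$ splits as you claim, with unipotent part $\overline{\langle \bar g_{u}\rangle}$ of dimension $0$ or $1$; and the dimension count $\dim {\mathcal U}({\mathcal G})=\dim({\mathcal U}({\mathcal G})\cap{\mathcal G}')+\dim(\mathrm{image})$ is the standard fiber-dimension formula, the image being closed since images of algebraic group homomorphisms are closed. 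The one point worth a remark is that over $\R$ the quotient ${\mathcal G}/{\mathcal G}'$ and the passage to real points involve the usual mild care (the real points of the quotient need not be the quotient of real points), but since everything here is a dimension bound it can be checked after complexification, and this matches the level of rigor at which the paper treats real algebraic groups throughout.
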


By this lemma, we consider the following definition.
\begin{definition}
Let $\Gamma$ be a torsion-free virtually polycyclic group. 
 For a real algebraic group $\mathcal G$, a  representation $\rho:\Gamma\to \mathcal G$   is called  {\em full} if the image $\rho(\Gamma)$ is Zariski-dense in $ \mathcal G$ and we have $\dim \mathcal U(\mathcal G)= {\rm rank}\, \Gamma$.
\end{definition}

We can compute the group cohomology of  torsion-free virtually polycyclic groups by using rational cohomology of algebraic groups.
Let  ${\rm Rep}(\Gamma)$ be the category of  representations $V_{\rho}$ of $\Gamma$.
For $V_{\rho}\in{\rm Rep}(\Gamma)$,  we define the group cohomology $H^{\ast}(\Gamma,V_{\rho})={\rm Ext}^{\ast}_{\Gamma}(\C,V_{\rho})$ as in \cite{Br}.
\begin{theorem}[\cite{KCT}]\label{poly1}
Let $\Gamma$ be a torsion-free virtually polycyclic group.
We suppose that  we have an algebraic group $\mathcal G$ and an injection $\Gamma\subset {\mathcal G}$ which is a full representation.
Let $V$ be a rational $\mathcal G$-module.
Then  the inclusion $\Gamma\subset {\mathcal G}$ induces a cohomology isomorphism
\[H^{\ast}({\mathcal G},V_{\rho})\cong H^{\ast}(\Gamma,V_{\rho}).
\]
\end{theorem}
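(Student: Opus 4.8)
The plan is to compare both sides with a common Lie-algebraic model --- the torus-invariants of the Lie algebra cohomology $H^{*}(\mathfrak{u}_{\mathcal{G}},V)$ --- and to build the comparison by d\'evissage along a subnormal series of $\Gamma$. First I would normalize the situation. Since $\Gamma$ is virtually polycyclic, hence virtually solvable, its Zariski closure is virtually solvable, so the identity component $\mathcal{G}^{\circ}$ is solvable and the connected reductive quotient is a torus $T$. Setting $\Gamma'=\Gamma\cap\mathcal{G}^{\circ}$, Zariski-density forces $\Gamma\to\mathcal{G}/\mathcal{G}^{\circ}$ to be onto, so $\Gamma/\Gamma'\cong\mathcal{G}/\mathcal{G}^{\circ}$ is finite, $\Gamma'$ is torsion-free polycyclic of the same rank, and since $\mathcal{U}(\mathcal{G})\subset\mathcal{G}^{\circ}$ the inclusion $\Gamma'\subset\mathcal{G}^{\circ}$ is again full. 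Using $H^{*}(\Gamma,V)=H^{*}(\Gamma',V)^{\Gamma/\Gamma'}$ together with the analogous $H^{*}(\mathcal{G},V)=H^{*}(\mathcal{G}^{\circ},V)^{\mathcal{G}/\mathcal{G}^{\circ}}$ (both valid since finite-group cohomology over $\R$ vanishes in positive degree) and checking that these finite actions agree under the comparison map, I reduce to $\mathcal{G}$ connected and $\Gamma$ strongly poly-$\Z$. On the algebraic side Section \ref{RC} then pins down the target: $H^{*}(\mathcal{G},V)=H^{*}(\mathcal{U}(\mathcal{G}),V)^{T}\cong H^{*}(\mathfrak{u}_{\mathcal{G}},V)^{T}$.

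Next I would produce a natural comparison map and run it through a Lyndon--Hochschild--Serre argument. Choosing $\Gamma_{1}\triangleleft\Gamma$ with $\Gamma/\Gamma_{1}\cong\Z$ and $\Gamma_{1}$ of rank $n-1$, let $\mathcal{G}_{1}$ be the Zariski closure of $\Gamma_{1}$ and $\mathcal{Q}=\mathcal{G}/\mathcal{G}_{1}$. I would compare the spectral sequence $H^{p}(\Z,H^{q}(\Gamma_{1},V))\Rightarrow H^{p+q}(\Gamma,V)$ with the Hochschild--Serre spectral sequence $H^{p}(\mathcal{Q},H^{q}(\mathcal{G}_{1},V))\Rightarrow H^{p+q}(\mathcal{G},V)$. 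Assuming inductively that $\Gamma_{1}\subset\mathcal{G}_{1}$ is full and that the comparison is an isomorphism for $\Gamma_{1}$, the inductive step reduces (by naturality, since $H^{q}(\Gamma_{1},V)\cong H^{q}(\mathcal{G}_{1},V)$ as $\mathcal{Q}$-modules) to a rank-one statement: for the finite-dimensional rational $\mathcal{Q}$-module $W=H^{q}(\mathcal{G}_{1},V)$ with the induced action of the Zariski-dense subgroup $\Z$, one needs $H^{*}(\Z,W)\cong H^{*}(\mathcal{Q},W)$. Here the full hypothesis is essential: writing $\mathcal{Q}^{\circ}=\mathbb{G}_{a}\times S$ with $S$ a torus and the image of a generator as $\exp(X)\sigma_{0}$ ($X$ nilpotent from $\mathbb{G}_{a}$, $\sigma_{0}$ semisimple and Zariski-dense in $S$), a direct computation shows that $\ker(\exp(X)\sigma_{0}-1)$ and $\operatorname{coker}(\exp(X)\sigma_{0}-1)$ both collapse onto the $S$-fixed subspace, where they reduce to $\ker X$ and $\operatorname{coker}X$; density of $\sigma_{0}$ makes every nontrivial $S$-character nonzero on it, so only $W^{S}$ contributes, matching $H^{*}(\mathbb{G}_{a},W)^{S}=H^{*}(\mathcal{Q},W)$.

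The hard part will be exactly the bookkeeping of fullness along the series. A naive cyclic quotient can fail to be full: if the chosen generator maps to a semisimple element then $\mathcal{Q}$ is a pure torus, $\dim\mathcal{U}(\mathcal{Q})=0<1$, and the rank-one comparison genuinely breaks down --- already $\Z\subset\mathbb{G}_{m}$ gives $H^{1}(\Z,\R)=\R\neq 0=H^{1}(\mathbb{G}_{m},\R)$. Equivalently, $\Gamma\cap\mathcal{U}(\mathcal{G})$ need not be a lattice in $\mathcal{U}(\mathcal{G})$: part of the rank of $\Gamma$ is carried ``diagonally'' by elements whose semisimple components are dense in $T$. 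The key technical point, and the place where fullness does its work, is therefore to choose the subnormal series compatibly with the splitting $\mathcal{G}^{\circ}=\mathcal{U}(\mathcal{G})\rtimes T$ so that each successive quotient contributes a genuine $\mathbb{G}_{a}$-direction --- absorbing into $\mathcal{U}(\mathcal{G})$ the rank that would otherwise escape into $T$ --- while density of the image of $\Gamma$ in $T$ forces only the $T$-invariant part of $H^{*}(\mathfrak{u}_{\mathcal{G}},V)$ to survive. Establishing that such a series exists and that fullness is inherited at every stage, with Lemma \ref{hut} used to pin the dimensions down to equalities, is the crux on which the whole induction rests.
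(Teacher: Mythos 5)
Your dévissage skeleton is sound, and it is essentially the natural route to this statement (note the paper under review does not prove Theorem~\ref{poly1} at all --- it quotes it from \cite{KCT} --- so there is no internal proof to match against). Your reductions are correct as sketched: the finite-index step via $\Gamma'=\Gamma\cap\mathcal{G}^{\circ}$ and vanishing of finite-group cohomology over $\R$; the comparison of the two Lyndon--Hochschild--Serre spectral sequences; and the rank-one lemma, where for $\Z$ dense in the abelian group $\mathcal{Q}$ with $\dim\mathcal{U}(\mathcal{Q})=1$ one splits $W$ into isotypic pieces for the (possibly disconnected --- use the diagonalizable closure $S'$ of $\sigma_{0}$, not just the torus $S$) semisimple part, uses Zariski-density to get $\chi(\sigma_{0})\neq 1$ for $\chi\neq 1$ so that $q-1$ is invertible off $W^{S'}$, and matches $\ker X$, $\operatorname{coker}X$ on $W^{S'}$ with $H^{*}(\mathcal{Q},W)=H^{*}(\mathbb{G}_{a},W)^{S'}$. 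One housekeeping point: the closures $\mathcal{G}_{1}=\overline{\Gamma_{1}}$ arising in the induction may be disconnected even after your initial reduction, so the induction should carry the theorem as stated for possibly disconnected $\mathcal{G}$ rather than the connected normalization.

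The genuine gap is your final paragraph, which both leaves the self-declared crux unproved and mis-diagnoses it. There is in general no way to ``choose the series compatibly with the splitting'' so as to absorb rank into $\mathcal{U}(\mathcal{G})$: the group $\Gamma$ may meet $\mathcal{U}(\mathcal{G})$ trivially, e.g.\ $1\mapsto(1,2)$ embeds $\Z$ fully and densely into $\mathbb{G}_{a}\times\mathbb{G}_{m}$ with $\Gamma\cap\mathbb{G}_{a}$ trivial; the rank is carried by the unipotent \emph{parts} of elements, not by elements lying in $\mathcal{U}(\mathcal{G})$. Fortunately no choice is needed, because fullness is inherited along \emph{any} normal series, by exactly the dimension count you gesture at: for $\Gamma_{1}\triangleleft\Gamma$ with $\Gamma/\Gamma_{1}\cong\Z$, set $\mathcal{G}_{1}=\overline{\Gamma_{1}}$ (normal, since normalizers are Zariski-closed) and $\mathcal{Q}=\mathcal{G}/\mathcal{G}_{1}$; then $\mathcal{Q}$ is the closure of the cyclic image of $\Z$, hence abelian with $\dim\mathcal{U}(\mathcal{Q})\le 1$ (the closure of a cyclic group is contained in the product of a diagonalizable group and a one-parameter unipotent group). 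Since $\mathcal{U}(\mathcal{G})\cap\mathcal{G}_{1}$ is unipotent and normal in $\mathcal{G}_{1}$, it lies in $\mathcal{U}(\mathcal{G}_{1})$, and Lemma~\ref{hut} gives $\dim\mathcal{U}(\mathcal{G}_{1})\le\operatorname{rank}\Gamma_{1}=n-1$; as the image of $\mathcal{U}(\mathcal{G})$ in $\mathcal{Q}$ is a connected unipotent subgroup, we get $n=\dim\mathcal{U}(\mathcal{G})\le\dim\bigl(\mathcal{U}(\mathcal{G})\cap\mathcal{G}_{1}\bigr)+\dim\mathcal{U}(\mathcal{Q})\le(n-1)+1=n$, forcing equality throughout. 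Hence $\Gamma_{1}\subset\mathcal{G}_{1}$ is automatically full, $\dim\mathcal{U}(\mathcal{Q})=1$ (so $\Z\to\mathcal{Q}$ is injective, its image being infinite, and full), and your torus pathology $\Z\subset\mathbb{G}_{m}$ can never occur under the global fullness hypothesis. With this three-line count supplied in place of the ``compatible series'' device, your induction closes and the proof is complete.
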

This isomorphism was discovered by Baues for  the case of trivial coefficients see the sentence after \cite[Theorem 1.8]{B}.

For any torsion-free virtually polycyclic group, we have a good full representation.
\begin{definition}\label{a-d-2}
We call an  algebraic group  ${\mathcal A}_{\Gamma}$ an {\em algebraic hull} of $\Gamma$ if there exists an injective group homomorphism $\psi:\Gamma\to {\mathcal A}_{\Gamma}$
so that\begin{itemize}
\item  $\psi:\Gamma\to {\mathcal A}_{\Gamma}$ is a full representation.
\item   $Z_{{\mathcal A}_{\Gamma}}({\mathcal U}_{\Gamma})\subset {\mathcal U}_{\Gamma}$ where  $Z_{{\mathcal A}_{\Gamma}}({\mathcal U}_{\Gamma})$ is the centralizer of the unipotent radical  ${\mathcal U}_{\Gamma}$ of ${\mathcal A}_{\Gamma}$. 

\end{itemize} 
\end{definition}
\begin{theorem}\label{a-t-1}{\rm (\cite[Theorem A.1]{B})}
There exists an algebraic hull of $\Gamma$ and an algebraic hull of $\Gamma$ is unique up to algebraic group isomorphism.
\end{theorem}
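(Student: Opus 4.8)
The plan is to prove existence by an explicit construction modeled on Mostow's semisimple splitting, and uniqueness by a rigidity argument that exploits the centralizer condition. For existence, I would first fix a faithful representation $\rho\colon \Gamma\hookrightarrow GL_{n}(\R)$; such a representation exists because a torsion-free virtually polycyclic group is linear over $\Z$ (Auslander--Swan). Taking the Zariski closure $\mathcal G$ of $\rho(\Gamma)$ produces a real algebraic group in which $\Gamma$ is Zariski-dense, and Lemma~\ref{hut} already gives $\dim{\mathcal U}(\mathcal G)\le {\rm rank}\,\Gamma$. However, an arbitrary Zariski closure need satisfy neither the fullness condition $\dim{\mathcal U}(\mathcal G)={\rm rank}\,\Gamma$ nor the centralizer condition $Z_{\mathcal G}({\mathcal U}(\mathcal G))\subset {\mathcal U}(\mathcal G)$, so the construction has to be normalized in two steps.

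To achieve fullness I would use the Jordan decomposition of the elements of $\Gamma$ inside $\mathcal G$. The semisimple parts generate a reductive subgroup which, because $\Gamma$ is solvable-by-finite, is a torus up to finite index; fixing a splitting $\mathcal G={\mathcal U}(\mathcal G)\rtimes \mathcal T$, one enlarges the unipotent part along the remaining lattice directions, i.e. one passes to the semisimple splitting (nilshadow) and replaces $\mathcal G$ by ${\mathcal U}\rtimes \mathcal T$ with $\dim{\mathcal U}={\rm rank}\,\Gamma$, arranged so that $\Gamma$ projects to a cocompact lattice in the associated solvable group. The centralizer condition is then imposed by dividing out the subgroup of $\mathcal T$ that acts trivially on $\mathcal U$; the essential point to verify is that this normalization keeps $\psi$ injective, which holds because the discarded central reductive directions meet $\psi(\Gamma)$ trivially.

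For uniqueness, let $(\mathcal A_{1},\psi_{1})$ and $(\mathcal A_{2},\psi_{2})$ be two hulls and consider the isomorphism $\alpha=\psi_{2}\circ\psi_{1}^{-1}\colon \psi_{1}(\Gamma)\to \psi_{2}(\Gamma)$ of Zariski-dense subgroups. I would extend $\alpha$ to an algebraic isomorphism in two stages. Since the $\Gamma$-lattice inside each ${\mathcal U}(\mathcal A_{i})$ is a canonical subgroup of $\Gamma$ (closely tied to its Fitting subgroup), $\alpha$ carries one to the other, and these lattices sit in unipotent groups of equal dimension ${\rm rank}\,\Gamma$ by fullness; Malcev rigidity then extends this restriction uniquely to an algebraic isomorphism ${\mathcal U}(\mathcal A_{1})\to {\mathcal U}(\mathcal A_{2})$. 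To extend over the reductive parts I would invoke the centralizer condition: $Z_{\mathcal A_{i}}({\mathcal U})\subset {\mathcal U}$ forces $\mathcal A_{i}/{\mathcal U}(\mathcal A_{i})$ to act faithfully on ${\mathcal U}(\mathcal A_{i})$, so the entire group law of $\mathcal A_{i}$ is determined by the conjugation action of $\Gamma$ on ${\mathcal U}(\mathcal A_{i})$, an action that $\alpha$ already intertwines. Assembling the two stages yields the required isomorphism $\mathcal A_{1}\cong \mathcal A_{2}$.

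I expect the main obstacle to lie in existence rather than uniqueness: producing an embedding that is simultaneously full and satisfies the centralizer condition forces one through the semisimple-splitting machinery, together with a delicate check that $\Gamma$ stays Zariski-dense and embedded after the unipotent radical is enlarged and the reductive part is cut down. Uniqueness, once the centralizer condition is in hand, is a comparatively routine rigidity argument; the only point requiring care there is the compatibility of the extension with the finite, non-connected components arising from the virtual (rather than strict) polycyclicity of $\Gamma$.
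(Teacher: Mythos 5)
The paper does not actually prove this statement: it is quoted from Baues \cite[Theorem A.1]{B} (with the polycyclic case going back to Mostow and \cite[Proposition 4.40, Lemma 4.41]{R}), so your attempt must be measured against that literature proof. Your existence sketch points in the right general direction (semisimple splitting/nilshadow is indeed the engine behind the construction), but your uniqueness argument contains a genuine error. You propose to restrict $\alpha$ to ``the $\Gamma$-lattice inside each ${\mathcal U}({\mathcal A}_{i})$'' and apply Malcev rigidity to get ${\mathcal U}({\mathcal A}_{1})\cong{\mathcal U}({\mathcal A}_{2})$. But $\Gamma\cap {\mathcal U}({\mathcal A}_{\Gamma})$ is in general \emph{not} a lattice in ${\mathcal U}({\mathcal A}_{\Gamma})$: take $\Gamma=\Z^{2}\rtimes_{A}\Z$ with $A\in SL_{2}(\Z)$ hyperbolic. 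Here ${\rm rank}\,\Gamma=3$, so by fullness $\dim {\mathcal U}({\mathcal A}_{\Gamma})=3$, while $\Gamma\cap{\mathcal U}({\mathcal A}_{\Gamma})$ is the Fitting subgroup $\Z^{2}$, of rank $2$; the generator of the $\Z$-factor has a nontrivial semisimple part in ${\mathcal A}_{\Gamma}$ and does not lie in the unipotent radical. So Malcev rigidity applied to $\Gamma\cap{\mathcal U}$ determines only a proper subgroup of ${\mathcal U}({\mathcal A}_{i})$ and cannot produce the isomorphism you need; likewise ``the group law of ${\mathcal A}_{i}$ is determined by the conjugation action of $\Gamma$ on ${\mathcal U}({\mathcal A}_{i})$'' is an assertion, not an argument, precisely because $\Gamma$ does not sit inside ${\mathcal U}$. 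The actual uniqueness proof takes the Zariski closure of the graph of $\alpha$ in ${\mathcal A}_{1}\times{\mathcal A}_{2}$ and uses Lemma \ref{hut} together with the centralizer condition to show both projections are algebraic isomorphisms --- notably the same graph-closure device this paper itself uses to build ${\mathcal G}_{f}$ in Section \ref{TWI}; you would do better to run uniqueness that way.

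On existence there is a second, smaller gap: after dividing out the subgroup of ${\mathcal T}$ acting trivially on ${\mathcal U}$, you assert injectivity of $\psi$ survives because ``the discarded central reductive directions meet $\psi(\Gamma)$ trivially.'' That is exactly the point requiring proof: a nontrivial $\gamma\in\Gamma$ can perfectly well have infinite order and semisimple image landing in a torus (e.g.\ ${\rm diag}(2,1/2)$), and ruling out that such a $\gamma$ centralizes a full-dimensional ${\mathcal U}$ is where torsion-freeness, fullness, and the dimension bookkeeping of \cite[4.40--4.41]{R} do real work. Finally, the part of the theorem that is genuinely new in \cite{B} relative to the polycyclic case --- passing from the hull of a finite-index normal polycyclic subgroup to the hull of the virtually polycyclic $\Gamma$, i.e.\ the finite-extension step --- is dismissed in your last sentence as routine, whereas it is a substantial portion of Baues' appendix; the nilshadow construction alone does not handle it.
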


Let $\Gamma$ be a torsion-free virtually polycyclic group and $f:\Gamma\to \Gamma$ an isomorphism.
Take an algebraic hull $\psi:\Gamma\to {\mathcal A}_{\Gamma}$.
Then, $f$ extends uniquely to an isomorphism $F: {\mathcal A}_{\Gamma}\to {\mathcal A}_{\Gamma}$ by \cite[Lemma 4.41]{R}.
In fact, we obtain $F$ by the following way.

Define $\Phi_{f}: \Gamma\to  {\mathcal A}_{\Gamma}\times  {\mathcal A}_{\Gamma}$ as $\Phi_{f}(\gamma)=(\psi(\gamma), \psi\circ f(\gamma))$.
Let ${\mathcal G}_{f}$ be the real Zariski-closure of  $\Phi_{f}(\Gamma)$ in $ {\mathcal A}_{\Gamma}\times  {\mathcal A}_{\Gamma}$.
For the projections $p_{1}:{\mathcal A}_{\Gamma}\times  {\mathcal A}_{\Gamma}\ni (a,b)\mapsto (a)\in {\mathcal A}_{\Gamma}$ and $p_{2}:{\mathcal A}_{\Gamma}\times  {\mathcal A}_{\Gamma}\ni (a,b)\mapsto (b)\in {\mathcal A}_{\Gamma}$, we consider the restriction $\pi_{i}:{\mathcal G}_{f}\to {\mathcal A}_{\Gamma}$ of $p_{i}$ on ${\mathcal G}_{f}$ for $i=1,2$.
Then $\pi_{1}$ is an isomorphism and desired $F$ is $\pi_{2}\circ \pi_{1}^{-1}$.

For the isomorphism $F: {\mathcal A}_{\Gamma}\to {\mathcal A}_{\Gamma}$, by Theorem \ref{poly1},
for $V_{\phi}\in {\rm Rep}({\mathcal A}_{\Gamma})$, 
the induced map on the cohomology
\[H^{\ast}(f):H^{\ast}(\Gamma,V_{\phi})\to H^{\ast}(\Gamma,V_{\phi\circ f})
\]
is identified with the map
\[H^{\ast}(F):H^{\ast}({\mathcal A}_{\Gamma},V_{\phi})\to H^{\ast}({\mathcal A}_{\Gamma},V_{\phi\circ F}).
\]

We apply the arguments in the last section to the algebraic group ${\mathcal A}_{\Gamma}$.
Take  $V_{\phi}\in {\rm Rep}({\mathcal A}_{\Gamma}/{\mathcal U}_{\Gamma})$ and $\Psi$ for Proposition \ref{tratar1}.
The isomorphisms 
\[H^{\ast}(\Gamma,V_{\phi})\cong H^{\ast}({\mathcal A}_{\Gamma},V_{\phi})\cong H^{\ast}({\frak u}_{\Gamma}, V_{\phi})^{\mathcal G/{\mathcal U}_{\Gamma}}=(H^{\ast}({\frak u}_{\Gamma})\otimes V_{\phi})^{\mathcal G/{\mathcal U}_{\Gamma}}\cong \overline{H^{\ast}({\frak u}_{\Gamma}})
\] 
identify the linear map $\Psi \circ H^{\ast}(f):H^{\ast}(\Gamma,V_{\phi})\to H^{\ast}(\Gamma,V_{\phi}) $ with  $H^{\ast}(F_{\mathcal U_{\Gamma}\ast }):  \overline{H^{\ast}({\frak u}_{\Gamma})}\to  \overline{H^{\ast}({\frak u}_{\Gamma})}$ where 
$\frak u_{\Gamma}$ is the Lie algebra of ${\mathcal U}_{\Gamma}$.
\begin{proposition}\label{ggg}
Let $A$ be a matrix presentation of the linear map
$F_{\mathcal U\ast}:\frak u_{\Gamma}\to \frak u_{\Gamma}$.
Then we have 
\[\sum_{i}(-1)^{i}{\rm trace} (\Psi \circ H^{\ast}(f))_{\vert H^{i}(\Gamma, V_{\phi})}={\rm det}(I-A).\]

\end{proposition}
\section{Main result}\label{TWI}
We give a "linearization formula" for any diffeomorphism of any infra-solvmanifold by using the twisted  Lefschetz number and algebraic groups.
It is known  that the fundamental group of every infra-solvmanifold is a torsion-free virtually polycyclic group (see \cite{FJ}).

Let  $M$ be an infra-solvmanifold with the fundamental  group $\Gamma$ and  $\varphi: M\to M$  a diffeomorphism.
Denote by $f:\Gamma\to \Gamma$ the homomorphism induced by $\varphi$.
For the universal covering $\tilde M$, we write $M=\tilde M/\Gamma$.
Then, $\varphi$ is described as a diffeomorphim $\tilde\varphi: \tilde M\to \tilde M$ so that for $x\in \tilde M$ and $g\in \Gamma$, $ \tilde\varphi(gx)=f(g)\tilde\varphi(x)$.
For $V_{\phi}\in {\rm Rep}(\Gamma)$, we define the flat bundle $E_{\phi}=(\tilde M\times V_{\phi})/\Gamma$.
Then $\varphi^{\ast}E_{\phi}=E_{\phi\circ f}$.
By the standard theory (see \cite[Lemma 7.4]{R} for instance), 
 the induced map $H^{\ast}(\varphi):H^{\ast}(M, E_{\phi})\to H^{\ast}(M, \varphi^{\ast}E_{\phi})$ is identified 
 with $H^{\ast}(f):H^{\ast}(\Gamma, V_{\phi})\to H^{\ast}(\Gamma, V_{\phi\circ f})$.
 Take  $V_{\phi}\in {\rm Rep}({\mathcal A}_{\Gamma}/{\mathcal U}_{\Gamma})$ and an isomorphism  $\Psi: V_{\phi\circ F}\to V_{\phi}$  as in the last section.
 Considering $\Psi: V_{\phi\circ f}\to V_{\phi}$ as an isomorphism in ${\rm Rep}(\Gamma)$,  $\Psi$ corresponds to an  isomorphism  $\Xi:\varphi^{\ast}E_{\phi}\to E_{\phi}$ of flat bundles.

Thus, by Proposition \ref{ggg}, we obtain the following result.
\begin{theorem}\label{MTTT}

Let $A$ be a matrix presentation of  $F_{\mathcal U_{\Gamma}\ast}:\frak u\to \frak u$.
Then we have 
\[L(\varphi, E_{\phi}, \Xi)={\rm det}(I-A).
\]
\end{theorem}

\begin{remark}
We have a good presentation of the universal covering $\tilde M$ of any infra-solvmanifold $M$.
For a  torsion-free virtually polycyclic group $\Gamma$ with an algebraic hull $\psi:\Gamma\to {\mathcal A}_{\Gamma}$,
since the extension (\ref{exun}) splits, we have a splitting ${\mathcal A}_{\Gamma}={\mathcal A}_{\Gamma}/{\mathcal U}_{\Gamma}\ltimes {\mathcal U}_{\Gamma}$.
We can take ${\mathcal A}_{\Gamma}$ defined over $\R$ and $\psi^{\prime}(\Gamma)$ contained in the $\R$-rational points ${\mathcal A}_{\Gamma}(\R)$ of   ${\mathcal A}_{\Gamma}$.
By an injection  $\psi:\Gamma\to {\mathcal A}_{\Gamma}(\R)$, we have the $\Gamma$-action on  ${\mathcal U}_{\Gamma}(\R)$ by the splitting.
In \cite{B},  Baues proved that the quotient space ${\mathcal U}_{\Gamma}(\R)/\Gamma$ is a compact aspherical manifold with the universal  covering ${\mathcal U}_{\Gamma}(\R)$ and every infra-solvmanifold $M$ with the fundamental group $\Gamma$ is diffeomorphic to  ${\mathcal U}_{\Gamma}(\R)/\Gamma$.
Thus, we can take $\tilde M={\mathcal U}_{\Gamma}(\R)$.
We notice that as a Lie group, ${\mathcal U}_{\Gamma}(\R)$ is simply connected nilpotent Lie group and by the exponential map,  ${\mathcal U}_{\Gamma}(\R)$ is diffeomorphic to the Euclidean space of dimension $\dim {\mathcal U}_{\Gamma}$.

\end{remark}

\begin{remark}
If $M$ is a nilmanifold, then $\Gamma$ is a nilpotent and we have ${\mathcal A}_{\Gamma}={\mathcal U}_{\Gamma}$ (see \cite{R}) and the above quotient space ${\mathcal U}_{\Gamma}(\R)/\Gamma$ is a quotient of a simply connected nilpotent Lie group ${\mathcal U}_{\Gamma}(\R)$ by a discrete subgroup $\Gamma$.
In this case,  the flat bundle $ E_{\phi}$ is trivial and so Theorem \ref{MTTT} is the usual linearization formula  for a nilmanifold as we explained in Introduction.
\end{remark}

\begin{example}
Let $G=\R\ltimes_{\lambda}\R^{2}$ such that $\lambda(t)=\left(\begin{array}{cc} \cos \pi t & -\sin \pi t \\ \sin \pi t &\cos \pi t\end{array}\right)$ for $t\in \R$.
Then the subgroup $\Gamma=\Z\ltimes_{\lambda}\Z^{2}$ is a cocompact discrete subgroup in $G$.
Consider the solvmanifold $M=G/\Gamma$.
We have the explicit  algebraic hull $\psi:\Gamma\to {\mathcal A}_{\Gamma}$ where ${\mathcal A}_{\Gamma}=\langle \tau \rangle \ltimes \C^{3}$ such that $\tau=\left(\begin{array}{ccc} 1 & 0&0 \\ 
0 &-1&0\\
0&0&-1\end{array}\right)$ and $\psi(t,u,v)=(\tau^{t},t,u,v)$ for $(t,u,v)\in \Gamma$.
We consider the automorphism $f:\Gamma\to \Gamma$ defined   by the matrix $A=\left(\begin{array}{ccc} -1 & 0&0 \\ 
0 &a&b\\
0&c&d\end{array}\right)$ such that $\left(\begin{array}{cc} a &b \\ c &d\end{array}\right)\in SL(2,\Z)$. 
This $f$ can not be extended to an automorphism of the Lie group $G$.
We can extend $f$ to the automorphism $F: {\mathcal A}_{\Gamma}\to  {\mathcal A}_{\Gamma}$ such that $F(\tau)=\tau$ and $F$ on ${\mathcal U}_{\Gamma}=\C^{3}$ is  defined by  the matrix $A$.
We can compute 
\[H^{i}(M,\C)=H^{i}(\Gamma,\C)=H^{i}(\C^{3})^{\langle \tau \rangle}=\left\{\begin{array}{cc} \C &(0\le i\le 3) \\ 0 & ({\rm other }\,\, i )\end{array}\right. 
\]
and we have $H^{\ast}(f)_{\vert H^{i}(\Gamma,\C)}=(-1)^{i}$.
Consider a diffeomorphism $\varphi:M\to M$ inducing $f$ on the fundamental group $\Gamma$ (e.g. $\varphi$ is the map on $M=\R^{3}/\Gamma$ defined by  the matrix $A$ where we consider $G=\R^{3}$ as a smooth manifold).
Then the ordinary Lefschetz number is $L(\varphi)=4$.
In contrast, for the above $E_{\phi}$ and $\Xi$, by Theorem  \ref{MTTT}, we have
\[L(\varphi, E_{\phi}, \Xi)={\rm det}(I-A)=4-2(a+d).\]

\end{example}

\section{Application}
We consider a solvmanifold $G/\Gamma$ for a discrete presentation $(G,\Gamma)$ and  a  diffeomorphism $\varphi: G/\Gamma\to G/\Gamma$.
Let $\g$ be the Lie algebra of $G$.
We suppose that the induced map $f:\Gamma\to \Gamma $ can be extended to a Lie group homomorphism $T:G\to G$.
Consider the derivation $T_{\ast}:\g\to \g$.
If an isomorphism $H^{\ast}(G/\Gamma,\R)\cong H^{\ast}(\g,\R)$ holds, then we have the equation
\[L(\varphi)={\rm det}(I-B)
\]
where $B$ is a matrix presentation of  $T_{\ast}:\g\to \g$.
But in general an isomorphism $H^{\ast}(G/\Gamma,\R)\cong H^{\ast}(\g,\R)$ does not hold.
Thus the equation $L(\varphi)={\rm det}(I-B)$ does not hold.
Even if an isomorphism $H^{\ast}(G/\Gamma,\R)\cong H^{\ast}(\g)$ does not hold, it is expected that the number ${\rm det}(I-B)$ gives some information for fixed points.
As an application of Theorem \ref{MTTT}, we will answer such expectation.

A solvmanifold $G/\Gamma$ is an infra-solvmanifold with the fundamental group $\Gamma$.
For the same notation as in Theorem \ref{MTTT}, we have:
\begin{theorem}

\[L(\varphi, E_{\phi}, \Xi)={\rm det}(I-B).
\]
\end{theorem}

\begin{proof}
Let $G$ be a simply connected solvable Lie group.
For a finite dimensional representation $\rho: G\to  GL(V_{\rho})$, taking the Zariski-closure $\mathcal G$ of $\rho(G)$,  f
we have $\dim {\mathcal U}({\mathcal G})\le \dim G$  (\cite[Lemma 4.36.]{R}).  
For an algebraic group $\mathcal G$ and a representation $\rho: G\to \mathcal G$,
we say that $\rho$ is a full representation if  the image $\rho(G)$ is Zariski-dense in $ \mathcal G$ and we have $\dim \mathcal U= \dim  G$.

We call an  algebraic group  ${\mathcal A}_{G}$ an algebraic hull of $G$ if there exists an injective Lie  group homomorphism $\psi^{\prime}:G\to {\mathcal A}_{G}$
so that:\begin{itemize}
\item $\psi^{\prime}:G\to {\mathcal A}_{G}$ is a full representation.
\item  $Z_{{\mathcal A}_{G}}({\mathcal U}_{G})\subset {\mathcal U}_{G}$ where ${\mathcal U}_{G}$ is the unipotent radical of ${\mathcal A}_{G}$. 
\end{itemize}
As similar to virtually polycyclic case, 
there exists an algebraic hull ${\mathcal A}_{G}$ of $G$ and an algebraic hull of $G$ is unique up to algebraic group isomorphism (see \cite[Proposition 4.40, Lemma 4.41]{R}).
Moreover, we can take ${\mathcal A}_{G}$ defined over $\R$ and $\psi^{\prime}(G)$ contained in the $\R$-rational points ${\mathcal A}_{G}(\R)$ of   ${\mathcal A}_{G}$.

Take a real algebraic hull $\psi^{\prime}:G\to {\mathcal A}_{G}$ of $G$ such that ${\mathcal A}_{G}$  is defined over $\R$ and $\psi^{\prime}(G)\subset {\mathcal A}_{G}(\R)$.
Then, by the similar arguments on virtually polycyclic groups, for an isomorphism $T:G\to G$  of Lie groups, 
$T$ extends uniquely to an isomorphism $F: {\mathcal A}_{G}\to {\mathcal A}_{G}$.
It is known that the composition $Q=q\circ \psi^{\prime}: G\to {\mathcal U}_{G}(\R)$ as a smooth map is a diffeomorphism (see \cite[Proposition 2.3]{B}).
Hence, for the derivation $F_{\mathcal U_{G} \ast}:{\mathcal U}_{G}(\R)\to {\mathcal U}_{G}(\R)$, we have $F_{\ast}=Q_{\ast} T_{\ast} Q_{\ast}^{-1}$.
This implies that for matrix presentations $A$ and $B$ of $F_{\ast}$ and $T_{\ast}$ respectively,
we have \[{\rm det}(I-A)={\rm det}(I-B).\]

We suppose that $G$ has a cocompact discrete subgroup $\Gamma$ and $T:G\to G$ is an extension of  $f:\Gamma\to \Gamma $. 
It is known that the  Zariski-closure of $\psi^{\prime}(\Gamma)$ in ${\mathcal A}_{G}$ is the  algebraic hull ${\mathcal A}_{\Gamma}$ of a torsion-free polycyclic group $\Gamma$ and $ {\mathcal U}_{\Gamma}= {\mathcal U}_{G}$ (see \cite[Proof of Theorem 4.34]{R}).
Thus we can say that the restriction $F:{\mathcal A}_{\Gamma}\to {\mathcal A}_{\Gamma}$ is a unique extension of  $f:\Gamma\to \Gamma $.

Now we assume that $f:\Gamma\to \Gamma $ is induced by a  diffeomorphism $\varphi: G/\Gamma\to G/\Gamma$.
Then, by Theorem \ref{MTTT}, we have 
 \[L(\varphi, E_{\phi}, \Xi)={\rm det}(I-A).
\] 
Hence the theorem follows.

\end{proof}


\begin{thebibliography}{40}
\bibitem{AB}
M. F. Atiyah, R. Bott,  A Lefschetz fixed point formula for elliptic complexes. II. Applications. Ann. of Math. (2) {\bf 88} 1968 451--491. 


\bibitem{B}
O. Baues, Infra-solvmanifolds and rigidity of subgroups in solvable linear algebraic groups. Topology {\bf 43} (2004), no. 4, 903--924.

\bibitem{Br}K. Brown,  Cohomology of groups. Berlin, Heidelberg, New York: Springer 1982.

\bibitem{FJ}
F. T. Farrell and L. E. Jones, Classical Aspherical Manifolds, Amer. Math. Soc.,
1990.


\bibitem{Haia}R. Hain, Remarks on non-abelian cohomology of proalgebraic groups. J. Algebraic Geom. {\bf 22} (2013), no. 3, 581--598.
\bibitem{Hatt} A. Hattori, Spectral sequence in the de Rham cohomology of fibre bundles. J. Fac. Sci. Univ. Tokyo Sect. I {\bf 8} 1960 289--331 (1960). 

\bibitem{Hoc} G. Hochschild,  Cohomology of algebraic linear groups. Illinois J. Math. {\bf 5} 1961 492--519. 

\bibitem{KCT} H. Kasuya,
Central theorems for cohomologies of certain solvable groups.
Trans. Amer. Math. Soc. {\bf369} (2017), no. 4, 2879--2896.

\bibitem{Jan}
J. C. Jantzen,  Representations of algebraic groups. Pure and Applied Mathematics, {\bf 131}. Academic Press, Inc., Boston, MA, 1987.


\bibitem{JM} J. Jezierski, W. Marzantowicz,  Homotopy methods in topological fixed and periodic points theory.  Topological Fixed Point Theory and Its Applications, {\bf 3}. Springer, Dordrecht, 2006.
\bibitem{Mal}
A. Malcev, On a class of homogeneous spaces. (Russian) Izvestiya Akad. Nauk. SSSR. Ser. Mat. {\bf 13}, (1949). 9--32.

\bibitem{Mc} C. K. McCord,
Nielsen numbers and Lefschetz numbers on solvmanifolds. Pacific J. Math. {\bf 147} (1991), no. 1, 153--164. 

\bibitem{Mos1} G. D. Mostow, Fully reducible subgroups of algebraic groups. Amer. J. Math. {\bf 78} (1956), 200--221.


\bibitem{Nom}
K. Nomizu, On the cohomology of compact homogeneous spaces of nilpotent Lie groups. 
Ann. of Math. (2) {\bf 59}, (1954). 531--538.

\bibitem{OV}A. L. Onishchik, E. B. Vinberg (Eds), Lie groups and Lie algebras I\hspace{-.1em}I, Springer (2000).
 \bibitem{R}
 M.S. Raghunathan, Discrete subgroups of Lie Groups, Springer-Verlag, New York, 1972.

\bibitem{TY}
P. Tauvel and R. W. T. Yu, Lie algebras and algebraic groups, Springer Monographs in Mathematics, Springer-Verlag, Berlin, 2005. 
\end{thebibliography}
\end{document}